%% LyX 2.1.2 created this file.  For more info, see http://www.lyx.org/.
%% Do not edit unless you really know what you are doing.
\documentclass[english]{article}
\usepackage[T1]{fontenc}
\usepackage[latin9]{inputenc}
\usepackage{geometry}
\geometry{verbose,tmargin=2cm,bmargin=2cm}
\usepackage{verbatim}
\usepackage{amsthm}
\usepackage{amsmath}
\usepackage{amssymb}
\usepackage{esint}

\makeatletter
%%%%%%%%%%%%%%%%%%%%%%%%%%%%%% Textclass specific LaTeX commands.
\theoremstyle{plain}
\newtheorem{thm}{\protect\theoremname}
  \theoremstyle{plain}
  \newtheorem{lem}{\protect\lemmaname}
  \theoremstyle{plain}
  \newtheorem{prop}{\protect\propositionname}

\makeatother

\usepackage{babel}
  \providecommand{\lemmaname}{Lemma}
  \providecommand{\propositionname}{Proposition}
\providecommand{\theoremname}{Theorem}

\begin{document}
\global\long\def\CC{\mathbb{C}}
\global\long\def\cW{\mathcal{W}}

\global\long\def\RR{\mathbb{R}}
\global\long\def\Lie{\mathrm{Lie}}

\global\long\def\cW{\mathcal{W}}
\global\long\def\GL{\mathrm{GL}}

\global\long\def\cU{\mathcal{U}}

\title{A note on Kirillov model for representations of $\GL_{n}(\CC)$}

\author{Alexander Kemarsky}
\maketitle
\begin{abstract}
Let $G=\GL_{n}(\CC)$ and $1\ne\psi:\CC\to\CC^{\times}$ be an additive
character. Let $U$ be the subgroup of upper triangular unipotent
matrices in $G$. Denote by $\theta$ the character $\theta:U\to\CC$
given by
\[
\theta(u):=\psi(u_{1,2}+u_{2,3}+...+u_{n-1,n}).
\]
Let $P$ be the mirabolic subgroup of $G$ consisting of all matrices
in $G$ with the last row equal to $(0,0,...,0,1)$. We prove that
if $\pi$ is an irreducible generic representation of $\GL_{n}(\CC)$
and $\cW(\pi,\psi)$ is its Whittaker model, then the space $\{f|_{P}:P\to\CC:\, f\in\cW(\pi,\psi)\}$
contains the space of infinitely differentiable functions $f:P\to\CC$
which satisfy $f(up)=\psi(u)f(p)$ for all $u\in U$ and $p\in P$
and which have a compact support modulo $U$. A similar result was
proven for $\GL_{n}(F)$, where $F$ is a $p$-adic field by Gelfand-Kazhdan
\cite{gelfand1975representations}, and for $\GL_{n}(\RR)$ by Jacquet
\cite{jacquet2010distinction}.
\end{abstract}
Let $F$ be the field $\RR$ or $\CC$.%
\begin{comment}
be a local archimedean field, that is, $F=\RR$ or $F=\CC$
\end{comment}
{} Let $G_{n}(F)$ be the group $\GL_{n}(F)$, let $P(F)$ be the mirabolic
subgroup in $G_{n}(F)$ consisting of matrices in $G_{n}(F)$ with
the last row equal to $(0,0,...,0,1)$. Let $U_{n}(F)$ ( $\overline{U_{n}(F)}$
respectively) be the unipotent subgroups consisting of upper triangular
unipotent matrices ( lower triangular unipotent matrices respectively)
in $G_{n}(F)$. We fill fix a field $F$ and will abbreviate $G$
for the group $G(F)$. Let $\psi:F\to\CC^{\times}$ be a non-trivial
additive character of $F$ and denote by $\theta_{n}$ a character
on $U_{n}$ given by
\[
\theta_{n}(x):=\psi(x_{12}+x_{23}+...+x_{n-1,n})
\]
for $x\in U$. Let $\pi$ be a unitary irreducible representation
of $G_{n}$ on a Hilbert space \global\long\def\cH{\mathcal{H}}
$\cH$ with norm $||\cdot||.$ We let \global\long\def\cV{\mathcal{V}}
$\cV$ be the space of the $G_{n}$ smooth vectors, equipped with
the topology defined by the semi-norms
\[
v\to||d\pi(X)v||
\]
with $X\in\cU(G_{n})$. We let $\cV'$ be the topological conjugate
dual. We have inclusions
\[
\cV\subset\cH\subset\cV'.
\]
The positive-definite scalar product on $\cV\times\cV$ extends to
$\cV\times\cV'$. Recall $\pi$ is said to be generic if there is
an element $\lambda\ne0$ of $\cV'$ such that
\[
\left(\pi(u)v,\lambda\right)=\theta_{n}(u)\left(v,\lambda\right)
\]
for all $v\in\cV$ and $u\in U_{n}.$ Up to a scalar factor, the vector
$\lambda$ is unique \cite{shalika1974multiplicity}. For every $v\in\cV$
we define a function $W_{v}$ on $G_{n}$ by
\[
W_{v}(g)=(\pi(g)v,\lambda).
\]
We let $\cW(\pi,\psi)$ be the Whittaker model of $\pi$, that is,
the space spanned by the functions
\[
g\mapsto(\pi(g)v,\lambda)\,,
\]
where $v$ is in $\cV$. We identify $\cV$ and $\cW(\pi,\psi).$
Then the scalar product induced by $\cH$ on $\cV$ is a scalar multiple
of the scalar product defined by the convergent integral
\[
(v_{1},v_{2}):=\int_{U_{n-1}\backslash G_{n-1}}W_{v_{1}}\left(\begin{array}{cc}
g & 0\\
0 & 1
\end{array}\right)\overline{W_{v_{2}}\left(\begin{array}{cc}
g & 0\\
0 & 1
\end{array}\right)}dg,
\]
see \cite{jacquet1981euler}. We may assume the scalar product on
$\cV$ is equal to this convergent integral. %
\begin{comment}
If $\pi$ is generic then the space of functions $W_{v}:G_{n}\to\CC$
for $v\in\cH$ is called its Whittaker model.
\end{comment}
{} \\
We denote by $C_{c}^{\infty}(\theta_{n-1},G_{n-1})$ the space of
smooth functions $f:G_{n-1}\to\CC$ which are compactly supported modulo $U_{n-1}$, 
such that $f(ug)=\theta_{n}(u)f(g)$ for all $u\in U_{n-1},\, g\in G_{n-1}.$
\\
In this note we prove the following theorem for $F=\CC$.
\begin{thm}
\label{thm 1} Let $\pi$ be a generic unitary irreducible representation
of $G_{n}(:=G_{n}(\CC))$. Given $\phi\in C_{c}^{\infty}(\theta_{n-1},G_{n-1})$
there is a unique $W_{\phi}\in\cV$ such that, for all $g\in G_{n-1},$
\[
W_{\phi}\left(\begin{array}{cc}
g & 0\\
0 & 1
\end{array}\right)=\phi(g).
\]
Furthermore, the map $\phi\to W_{\phi}$ is continuous.
\end{thm}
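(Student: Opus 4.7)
The integral formula for the scalar product on $\cV$ recorded in the introduction gives
\[
\|v\|^{2}=\int_{U_{n-1}\backslash G_{n-1}}\left|W_{v}\!\left(\begin{smallmatrix}g & 0\\ 0 & 1\end{smallmatrix}\right)\right|^{2}dg
\]
for every $v\in\cV$, so the restriction map
\[
r:\cV\longrightarrow C^{\infty}(U_{n-1}\backslash G_{n-1},\theta_{n-1}),\qquad r(W)(g)=W\!\left(\begin{smallmatrix}g & 0\\ 0 & 1\end{smallmatrix}\right),
\]
is an $L^{2}$-isometry onto its image. This forces $W_{\phi}$ to be unique whenever it exists, and moreover gives continuity of $\phi\mapsto W_{\phi}$ with respect to the single $\cH$-seminorm, via $\|W_{\phi}\|_{\cH}=\|\phi\|_{L^{2}}$.

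\textbf{Existence: overall strategy.} I would argue by induction on $n$, the case $n=1$ being trivial. Set $\mathcal{K}_{\pi}:=r(\cV)\cap C_{c}^{\infty}(\theta_{n-1},G_{n-1})$. The whole game reduces to showing $\mathcal{K}_{\pi}=C_{c}^{\infty}(\theta_{n-1},G_{n-1})$. Because $r$ intertwines $\pi(p)$ with right translation by $p$ and $\cV$ is $P$-stable, $\mathcal{K}_{\pi}$ is automatically a right $P$-submodule of $C_{c}^{\infty}(\theta_{n-1},G_{n-1})$. Since $G_{n-1}\subset P$ acts transitively on itself by right translation, once I know $\mathcal{K}_{\pi}$ contains a single bump of arbitrarily small support near the identity, a smooth partition of unity writes any $\phi\in C_{c}^{\infty}(\theta_{n-1},G_{n-1})$ with support $K$ modulo $U_{n-1}$ as a finite sum of right $P$-translates of such bumps, and lifts to $W_{\phi}\in\cV$.

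\textbf{The bump step.} Producing one nonzero element of $\mathcal{K}_{\pi}$ with small support is the heart of the matter. Start from any $v\in\cV$ with $r(v)$ nonvanishing near the identity; for $\alpha\in C_{c}^{\infty}(P)$, the restriction of $\pi(\alpha)v$ satisfies
\[
r(\pi(\alpha)v)(g)=\int_{P}\alpha(p)\,r(v)(gp)\,dp,
\]
and the task is to choose $\alpha$ so that this function is compactly supported modulo $U_{n-1}$. The plan is to combine gauge estimates of Jacquet--Shalika type for $r(v)$ on the cone $U_{n-1}\backslash G_{n-1}$ with the inductive hypothesis for $\GL_{n-1}(\CC)$, applied to the $\GL_{n-1}(\CC)$-representation arising from the mirabolic filtration of $\pi|_{P}$. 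These two ingredients together should let me construct an $\alpha$ that annihilates $r(v)$ at infinity and leaves it essentially unchanged near the identity.

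\textbf{Continuity and main obstacle.} Continuity of $\phi\mapsto W_{\phi}$ in the topology of $\cV$ requires seminorm bounds $\|d\pi(X)W_{\phi}\|\le C_{X}\bigl(\|\phi\|_{\beta_{1}}+\dots+\|\phi\|_{\beta_{N}}\bigr)$ for every $X\in\cU(G_{n})$, and will follow by tracking the quantitative dependence of the bump, the partition of unity, and the convolution kernels $\alpha$ on the input $\phi$. The principal obstacle is the bump step: one needs sufficiently precise asymptotics of $r(v)$ at infinity to design $\alpha$ and to make the resulting truncation procedure continuous and equivariant. Once that is in place, Stage 2 and the continuity bookkeeping are essentially formal.
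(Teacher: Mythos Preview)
Your uniqueness paragraph is fine. The existence strategy, however, is not the paper's, and the step you yourself flag as ``the principal obstacle'' is a genuine gap rather than a routine detail. Convolving by $\alpha\in C_c^\infty(P)$ is a smoothing operation: $r(\pi(\alpha)v)$ is a local average of $r(v)$ and inherits its asymptotic behaviour; it does not truncate. The $p$-adic mechanism you seem to have in mind---that acting through the unipotent radical of $P$ multiplies $r(v)(g)$ by a Fourier transform $\hat\alpha$, which one then takes compactly supported---breaks down over $\CC$, since a compactly supported $\hat\alpha$ forces $\alpha$ out of $C_c^\infty$, and in any case the resulting constraint is on the last row of $g$, not compact support modulo $U_{n-1}$. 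Jacquet--Shalika gauge estimates give rapid decay, never compact support. Your inductive appeal is also unclear: $\pi|_P$ is neither unitary nor irreducible, so the theorem for $\GL_{n-1}(\CC)$ does not apply to any obvious constituent. Finally, even granting a single bump, ``translates plus partition of unity'' does not produce an arbitrary $\phi\in C_c^\infty(\theta_{n-1},G_{n-1})$ exactly, only approximations, and you have not shown $\mathcal{K}_\pi$ is closed.

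The paper (following Jacquet's proof for $\GL_n(\RR)$) goes in the opposite direction and avoids all of this. Rather than starting from a smooth vector and trying to force its restriction to have compact support, it starts from the Whittaker functional $\lambda\in\cV'$ and smooths it: for $\phi\in C_c^\infty(G_{n-1})$ one sets
\[
u_\phi=\int_{G_{n-1}}\phi(g)\,\pi\!\left(\begin{array}{cc}g&0\\0&1\end{array}\right)\lambda\,dg,
\]
a vector that is a priori only $G_{n-1}$-smooth. The entire content is an algebraic lemma in the enveloping algebra, $\cU(G_n)\lambda\subset\cU(G_{n-1})Z(\cU(G_n))\lambda$: since $Z(\cU(G_n))$ acts by scalars and $\cU(G_{n-1})$-derivatives can be thrown onto $\phi$, every seminorm $\|d\pi(X)u_\phi\|$ with $X\in\cU(G_n)$ is bounded by seminorms of $\phi$. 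A Sobolev-type criterion then places $u_\phi$ in $\cV$, and a direct computation shows that $W_{u_\phi}$ restricted to $G_{n-1}$ is the $\theta_{n-1}$-average of $\phi$ over $U_{n-1}$, which surjects onto $C_c^\infty(\theta_{n-1},G_{n-1})$. Existence and continuity fall out simultaneously; there is no induction on $n$ and no attempt to carve compact support out of $r(\cV)$.
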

An analogous result was proven for $G_{n}(F)$, $F$ a $p$-adic field
by Gelfand-Kazhdan \cite{gelfand1975representations}. Recently it
was proven by Jacquet for $G_{n}(\RR)$ \cite{jacquet2010distinction}.
Our proof for $G_{n}(\CC)$ is similar to Jacquet proof. We sketch
here the main steps of the proof. There is only one technical lemma
that should be reproved for $G_{n}(\CC).$ \\
For a Lie algebra $\mathfrak{g}$ over $\CC$ we denote by $\cU(\mathfrak{g)}$
its universal enveloping algebra. For a Lie group $G$ we denote by
$\Lie(G)$ its Lie algebra (over $\CC)$, by $\cU(G)$ its universal
enveloping algebra, by $Z(\cU(G))$ the center of $\cU(G)$. Denote
the image of $X\in\Lie(G)$ under the natural embedding in $\cU(G)$
by $D_{X}$ . We will identify $\Lie(G)$ with its image in $\cU(G)$.
Note that $D_{X}$ can be realized as the linear differential operator
on the space $C_{c}^{\infty}(G)$ corresponding to derivative in the
direction $X$.\\
Let us denote the elementary matrix with $1$ in the $(i,j)$ place
and $0$ in all other places by $E_{ij}.$ Let us denote $D_{E_{ij}}:=D_{(i,j)}.$
Denote the elementary matrix with $\sqrt{-1}$ in the place $(i,j)$
and $0$ in all other places by $D_{\sqrt{-1}(i,j)}$. The set
\[
\left\{ D_{ij,}D_{\sqrt{-1}(i,j)}:1\le i,j\le n\right\}
\]
is a basis of $\Lie(G_{n}(\CC))$. Observe that there are two natural
embeddings of Lie algebras
\[
i_{1},i_{2}:\Lie(G_{n}(\RR))\to\Lie(G_{n}(\CC))
\]
given by
\[
i_{1}(D_{X})=\frac{1}{2}\left(D_{X}+\sqrt{-1}D_{\sqrt{-1}X}\right),\, i_{2}(D_{X})=\frac{1}{2}\left(D_{X}-\sqrt{-1}D_{\sqrt{-1}X}\right).
\]
 We have
\[
\Lie\left(G_{n}(\CC)\right)=i_{1}\left(\Lie\left(G_{n}(\RR)\right)\right)\oplus i_{2}\left(\Lie\left(G_{n}(\RR)\right)\right).
\]
That is, the complex vector space $\Lie\left(G_{n}(\CC)\right)$ is
a direct sum of the vector spaces
\[
i_{1}\left(\Lie\left(G_{n}(\RR)\right)\right),i_{2}\left(\Lie\left(G_{n}(\RR)\right)\right),
\]
and for all $X,Y\,\in\Lie\left(G_{n}\left(\RR\right)\right)$ we have
$[i_{1}(X),i_{2}(Y)]=0.$\\
The following lemma is the key lemma in the proof of Theorem \ref{thm 1}
for $G_{n}(\RR)$.
\begin{lem}
Consider a $\Lie(G_{n}(F))$ module $V$ and a vector $0\ne\lambda\in V$.
Suppose $D_{ij}\lambda=0$ for every pair of indices $1\le i,j\le n$
such that $j>i+1$. Suppose also that for every $1\le i\le n-1$ there
is a constant $c_{i}\ne0$ such that $D_{i,(i+1)}\lambda=c_{i}\lambda$.
If $F=\CC$ suppose also that $D_{\sqrt{-1}i,j}\lambda=0$ for every
pair of indices $1\le i,j\le n$ such that $j>i+1$ and for every
$1\le i\le n-1$ there is a constant $c_{\sqrt{-1}i}\ne0$ such that
$D_{\sqrt{-1}(i,i+1)}\lambda=c_{\sqrt{-1}i}\lambda.$ Then
\begin{gather*}
\Lie\left(\overline{U_{n}(F)}\right)\lambda\subset\cU(G_{n-1}(F))Z(\cU(G_{n}(F))\lambda,\\
\cU\left(\overline{U_{n}(F)}\right)\lambda\subset\cU(G_{n-1}(F))Z(\cU(G_{n}(F))\lambda,\\
\cU\left(\overline{G_{n}(F)}\right)\lambda\subset\cU(G_{n-1}(F))Z(\cU(G_{n}(F))\lambda.
\end{gather*}
\end{lem}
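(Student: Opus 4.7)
The three stated inclusions reduce to the first. Setting $W := \cU(\Lie(G_{n-1}(F))) \cdot Z(\cU(\Lie(G_n(F)))) \lambda$, a nested induction (outer: degree in $\cU(G_n)$; inner: $\cU(G_{n-1})$-degree of $a$ in the identity $Y(az\lambda) = azY\lambda + [Y,a]z\lambda$, exploiting $[Y,X] \in \Lie(G_n)$ for $X \in \Lie(G_{n-1})$) shows $\cU(\Lie(G_n(F))) \cdot W \subset W$, whence $\cU(\Lie(G_n(F))) \lambda \subset W$ (the third inclusion; the second is a weakening). The induction reduces everything to verifying $Y\lambda \in W$ for each basis element $Y$ of $\Lie(G_n(F))$: this is immediate for $Y = D_{ij}$ with $j > i+1$ (zero), for $Y = D_{i,i+1}$ (scalar multiple of $\lambda$), for $Y = D_{ii}$ with $i < n$ (in $\cU(\Lie(G_{n-1}))\lambda$), and for $Y = D_{nn}$, where one writes $D_{nn} = Z_1 - \sum_{i<n} D_{ii}$ with central $Z_1 = \sum_i D_{ii}$; the $\sqrt{-1}$-analogues for $F = \CC$ are parallel. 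The only case that is not formal is $Y \in \Lie(\overline{U_n(F)})$, which is exactly the first inclusion.

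Thus everything reduces to the first inclusion, and since $F=\RR$ is Jacquet's lemma, only $F=\CC$ remains. The plan is to exploit the paper's decomposition
\[
\Lie(G_n(\CC)) = i_1(\Lie(G_n(\RR))) \oplus i_2(\Lie(G_n(\RR)))
\]
into two commuting complex subalgebras and apply the real lemma to $\lambda$ viewed as a module over each copy separately. The Whittaker hypotheses transfer cleanly: $i_k(D_{ij})\lambda = \frac{1}{2}(D_{ij}\lambda \pm \sqrt{-1}\,D_{\sqrt{-1}(i,j)}\lambda) = 0$ for $j > i+1$, while $i_1(D_{i,i+1})\lambda = \frac{1}{2}(c_i + \sqrt{-1}\,c_{\sqrt{-1}i})\lambda$ and $i_2(D_{i,i+1})\lambda = \frac{1}{2}(c_i - \sqrt{-1}\,c_{\sqrt{-1}i})\lambda$. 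Under the nondegeneracy $c_i \pm \sqrt{-1}\,c_{\sqrt{-1}i} \ne 0$ (which holds for the Whittaker characters arising in Theorem \ref{thm 1}), $\lambda$ is a Whittaker vector for each commuting copy.

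Applying the $F=\RR$ lemma to $\lambda$ as an $i_k(\Lie(G_n(\RR)))$-module, for $k = 1, 2$, yields
\[
i_k(\Lie(\overline{U_n(\RR)}))\lambda \subset \cU(i_k(\Lie(G_{n-1}(\RR)))) \cdot Z(\cU(i_k(\Lie(G_n(\RR)))))\lambda.
\]
Two inclusions then lift this into the full complex algebra: trivially $\cU(i_k(\Lie(G_{n-1}(\RR)))) \subset \cU(\Lie(G_{n-1}(\CC)))$, and, crucially, $Z(\cU(i_k(\Lie(G_n(\RR))))) \subset Z(\cU(\Lie(G_n(\CC))))$. The latter is where the commutation $[i_1,i_2]=0$ stressed in the preamble does the decisive work: an element central in the $i_k$-subalgebra commutes with $i_k(\Lie(G_n(\RR)))$ by hypothesis and with $i_{3-k}(\Lie(G_n(\RR)))$ for free, hence centralizes all of $\Lie(G_n(\CC))$. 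Combined with the vector-space decomposition $\Lie(\overline{U_n(\CC)}) = i_1(\Lie(\overline{U_n(\RR)})) \oplus i_2(\Lie(\overline{U_n(\RR)}))$, this closes the first inclusion for $F=\CC$.

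The principal obstacle is the nondegeneracy $c_i \pm \sqrt{-1}\,c_{\sqrt{-1}i} \ne 0$ needed to apply the real lemma to each copy. The two combinations cannot vanish simultaneously (that would force $c_i = 0$), so in any degenerate edge case one applies the real lemma only to the surviving copy and handles the other by a direct argument exploiting the vanishing of the Whittaker character on its superdiagonal generator. For the Whittaker characters appearing in Theorem \ref{thm 1} both combinations are in fact nonzero, so no such fallback is required and the reduction proceeds mechanically once one has the commutation-based identification of centers.
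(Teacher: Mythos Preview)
Your proof is correct and follows the same strategy as the paper: reduce the second and third inclusions to the first, split $\Lie(G_n(\CC)) = i_1(\Lie(G_n(\RR))) \oplus i_2(\Lie(G_n(\RR)))$ into two commuting real copies, apply Jacquet's $F=\RR$ lemma to $\lambda$ in each copy, and recover $D_{n,r}\lambda$ and $D_{\sqrt{-1}(n,r)}\lambda$ as the sum and difference of $i_1(D_{n,r})\lambda$ and $i_2(D_{n,r})\lambda$. You are in fact more careful than the paper on two points it passes over silently: you spell out why $Z(\cU(i_k(\Lie G_n(\RR)))) \subset Z(\cU(\Lie G_n(\CC)))$ via $[i_1,i_2]=0$, and you flag the nondegeneracy $c_i \pm \sqrt{-1}\,c_{\sqrt{-1}i} \ne 0$ required to invoke the real lemma on each copy.
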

\begin{proof}
We will use the corresponding result for the case $\Lie(G_{n}(\RR))$
and deduce from it the result for $\Lie(G_{n}(\CC))$. Note that it
is enough to prove the first inclusion, as the second inclusion and
the third inclusion follows from the first by an application of theorem
of Poincare-Birkhoff-Witt. See \cite{jacquet2010distinction}, the
first lines of the proof of Lemma 3. \\
To prove the first inclustion we have to prove that
\[
D_{X}\lambda\in\cU(G_{n-1}(F))Z(\cU(G_{n}(F))\lambda
\]
for
\[
X\in\left\{ E_{n1},E_{n2},...,E_{nn}\right\} \cup\left\{ E_{\sqrt{-1}(n,1)},E_{\sqrt{-1}(n,2)},...,E_{\sqrt{-1}(n,n)}\right\} .
\]
By the corresponding result for $\Lie(G_{n}(\RR))$ we know that for
the Lie algebras $i_{1}\left(\Lie\left(G_{n-1}(\RR)\right)\right)$
and $i_{2}\left(\Lie\left(G_{n-1}(\RR)\right)\right)$ and $X\in\left\{ E_{n1},E_{n2},...,E_{nn}\right\} $
we have
\[
D_{i_{1,2}(X)}\lambda\in\cU\left(i_{1,2}\left(\Lie\left(G_{n-1}(\RR)\right)\right)\right)Z\left(\cU\left(i_{1,2}\left(\Lie\left(G_{n-1}(\RR)\right)\right)\right)\right)\lambda\subset\cU\left(G_{n-1}(\CC)\right)Z\left(\cU\left(G_{n-1}(\CC)\right)\right)\lambda.
\]
Thus,
\[
\frac{1}{2}\left(D_{n,r}\lambda+\sqrt{-1}D_{\sqrt{-1}n,r}\lambda\right),\,\frac{1}{2}\left(D_{n,r}-\sqrt{-1}D_{\sqrt{-1}n,r}\lambda\right)\in\cU\left(G_{n-1}(\CC)\right)Z\left(\cU\left(G_{n-1}(\CC)\right)\right)\lambda.
\]
 By adding and substracting the two terms on the left hand-side of
the last formula we obtain
\[
D_{n,r}\lambda,D_{\sqrt{-1}n,r}\lambda\in\cU\left(G_{n-1}(\CC)\right)Z\left(\cU\left(G_{n-1}(\CC)\right)\right)\lambda.
\]
 The lemma is proved.
\end{proof}
For the convenience of a reader we rewrite here only formulations
of the lemmas which are needed to prove Theorem \ref{thm 1}. All
the proofs are identical to that written in \cite{jacquet2010distinction},
so we see no need to repeat the proofs here.\\
From now on $F=\CC$. Consider the space $\cV_{n-1}$ of $G_{n-1}$
smooth vectors in $\cH$.
\begin{lem}
We have continuous inclusions
\[
\cV\subset\cV_{n-1}\subset\cH\subset\cV'_{n-1}\subset\cV'.
\]

\end{lem}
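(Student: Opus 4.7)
The plan is to treat this chain as two pairs of inclusions: the first two, $\cV\subset\cV_{n-1}\subset\cH$, are direct statements about smooth vectors, while the last two are obtained by dualizing. I would verify continuity at each step.

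For $\cV\subset\cV_{n-1}$: the block embedding $G_{n-1}\hookrightarrow G_{n}$ induces $\cU(G_{n-1})\subset\cU(G_{n})$, so any $G_{n}$-smooth vector is in particular $G_{n-1}$-smooth. The defining seminorms $v\mapsto\|d\pi(X)v\|$ with $X\in\cU(G_{n-1})$ for $\cV_{n-1}$ form a subfamily of those for $\cV$, giving continuity of the inclusion. For $\cV_{n-1}\subset\cH$: the Hilbert norm itself is the seminorm corresponding to $X=1\in\cU(G_{n-1})$, so continuity is immediate.

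For the two dual inclusions I would argue by duality. A vector $v\in\cH$ gives a functional $w\mapsto(w,v)$ on $\cV_{n-1}$ via the Hilbert pairing; Cauchy--Schwarz combined with the continuity of $\cV_{n-1}\subset\cH$ shows this functional is continuous, and the same estimate makes the assignment $v\mapsto(\,\cdot\,,v)$ continuous from $\cH$ into $\cV'_{n-1}$. Injectivity of this map reduces to the density of $\cV_{n-1}$ in $\cH$, which is G{\aa}rding's theorem applied to $\pi|_{G_{n-1}}$. The inclusion $\cV'_{n-1}\subset\cV'$ is then obtained by restriction of functionals along $\cV\hookrightarrow\cV_{n-1}$; continuity is automatic from continuity of that inclusion, and injectivity reduces to the density of $\cV$ in $\cV_{n-1}$.

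The main obstacle will be establishing that $\cV$ is dense in $\cV_{n-1}$ in the topology of $\cV_{n-1}$ (and not merely in the Hilbert norm). I would prove this by convolving $v\in\cV_{n-1}$ with an approximate identity $\psi_{k}\in C_{c}^{\infty}(G_{n})$ supported near the identity: each $\pi(\psi_{k})v$ lies in $\cV$, and for any $X\in\cU(G_{n-1})$ a change of variables in the convolution integral gives $d\pi(X)\pi(\psi_{k})v=\pi(\psi_{k}^{(X)})v$ for an appropriate left-derivative $\psi_{k}^{(X)}$ of $\psi_{k}$. As $\psi_{k}$ concentrates at the identity, this converges to $d\pi(X)v$ in $\cH$, yielding convergence in the topology of $\cV_{n-1}$. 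This is a standard functional-analytic argument, formally identical to the one in Jacquet's treatment of the real case \cite{jacquet2010distinction}.
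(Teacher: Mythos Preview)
Your proposal is correct and matches the paper's approach: the paper gives no proof of this lemma at all, declaring that ``all the proofs are identical to that written in \cite{jacquet2010distinction},'' and your sketch reconstructs precisely that standard argument while likewise deferring to Jacquet for the details. One small caution on the density step you flag as the main obstacle: the convergence $\pi(\psi_k^{(X)})v\to d\pi(X)v$ does not follow merely from $\psi_k$ concentrating at the identity, since the derivative of an approximate identity is not itself an approximate identity and $v$ is only $G_{n-1}$-smooth; the clean fix is a two-step approximation, first replacing $v$ by $\pi(\phi_j)v$ with $\phi_j\in C_c^\infty(G_{n-1})$ an approximate identity (so that $\pi(\phi_j)v\to v$ in $\cV_{n-1}$), and then noting that for each fixed $j$ one has $d\pi(X)\pi(\phi_j)\pi(\psi_k)v=\pi(\phi_j^{(X)})\pi(\psi_k)v\to\pi(\phi_j^{(X)})v=d\pi(X)\pi(\phi_j)v$ in $\cH$, because now the $G_{n-1}$-derivative lands on $\phi_j$ rather than on $\psi_k$.
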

~
\begin{lem}
The vector $\lambda$ which is appriori in $\cV'$ belongs to $\cV'_{n-1}$.
\end{lem}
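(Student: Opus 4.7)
The plan is to show $\lambda\in\cV'_{n-1}$ by producing a continuous seminorm bound
\[
|\lambda(v)|\le C\sum_{j}\|d\pi(Y_{j})v\|,\quad v\in\cV,
\]
for finitely many $Y_{j}\in\cU(G_{n-1})$, and then extending $\lambda$ to $\cV_{n-1}$ by density of $\cV$ in $\cV_{n-1}$. Density follows from a standard smoothing argument: for $w\in\cV_{n-1}$ and a $\delta$-sequence $f_{\epsilon}\in C_{c}^{\infty}(G_{n})$, $\pi(f_{\epsilon})w$ lies in $\cV$ and tends to $w$ in the $\cV_{n-1}$ topology.

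For the bound itself I will combine two inputs: the Kirillov integral formula recalled in the introduction, and a Sobolev embedding near the identity on $U_{n-1}\backslash G_{n-1}$. Writing $\lambda(v)=W_{v}(e)$, the task is to control the pointwise value of the Whittaker function at the identity. For any $Y\in\cU(G_{n-1})$ the Kirillov formula yields
\[
\|d\pi(Y)v\|^{2}=c\int_{U_{n-1}\backslash G_{n-1}}|R_{Y}W_{v}(g)|^{2}\,dg,
\]
where $R_{Y}W_{v}=W_{d\pi(Y)v}$, so the $\cV_{n-1}$ seminorms of $v$ realize $L^{2}$ Sobolev norms of $W_{v}|_{G_{n-1}}$ on the quotient.

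To bound $|W_{v}(e)|$ by these norms I will fix a smooth local section $s\colon V\to G_{n-1}$ of the projection $G_{n-1}\to U_{n-1}\backslash G_{n-1}$ over a small neighborhood $V$ of the identity coset, normalized so that $s(eU_{n-1})=e$, and apply the standard Sobolev inequality on $V$ to the smooth function $\widetilde{W}_{v}:=W_{v}\circ s$. This gives
\[
|W_{v}(e)|^{2}\le C_{V}\sum_{|\alpha|\le k}\int_{V}|\partial^{\alpha}\widetilde{W}_{v}(x)|^{2}\,dx
\]
for a sufficiently large integer $k$. Expanding each $\partial^{\alpha}\widetilde{W}_{v}(x)$ by the chain rule writes it as a finite linear combination, with coefficients smooth in $x$ on $V$, of terms $(R_{Y_{\beta}}W_{v})(s(x))$ with $Y_{\beta}\in\cU(G_{n-1})$. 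Comparing $dx$ with the invariant measure on the relatively compact $V$ (using that $|R_{Y}W_{v}|^{2}$ descends to $U_{n-1}\backslash G_{n-1}$) and applying Kirillov converts the right-hand side into $\sum_{\beta}\|d\pi(Y_{\beta})v\|^{2}$, completing the estimate.

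The main technical obstacle is the Sobolev step. One must organize the chain-rule expansion of $\partial^{\alpha}(W_{v}\circ s)$ cleanly into $\cU(G_{n-1})$ language and verify the measure comparison on $V$ versus the quotient. Both are routine but require care, since $W_{v}$ itself does not descend to $U_{n-1}\backslash G_{n-1}$; only $|W_{v}|^{2}$ does.
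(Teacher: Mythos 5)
Your core estimate is exactly the argument the paper is relying on (it defers the proof to \cite{jacquet2010distinction}): the identity $|\lambda(v)|=|W_v(e)|$, a local Sobolev inequality on a chart of $U_{n-1}\backslash G_{n-1}$ through a section $s$, the conversion of coordinate derivatives of $W_v\circ s$ into right derivatives $R_{Y_\beta}W_v=W_{d\pi(Y_\beta)v}$ with $Y_\beta\in\cU(G_{n-1})$, and the inner-product formula identifying $\int_{U_{n-1}\backslash G_{n-1}}|W_{d\pi(Y_\beta)v}|^2\,dg$ with $\|d\pi(Y_\beta)v\|^2$. The points you flag as delicate do go through, precisely because each $R_{Y_\beta}W_v$ is again the Whittaker function of a vector of $\cV$, so $|R_{Y_\beta}W_v|^2$ descends to the quotient and the inner-product formula applies to it.

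The one step that does not work as written is the density claim. For $w\in\cV_{n-1}$, $Y\in\Lie(G_{n-1})$ and a $\delta$-sequence $f_\epsilon\in C_c^\infty(G_n)$ one has
\[
d\pi(Y)\pi(f_\epsilon)w=\pi(f_\epsilon)\,d\pi(Y)w+\pi(h_\epsilon)w,\qquad h_\epsilon(g):=\frac{d}{dt}\Big|_{t=0}f_\epsilon\bigl(\exp(-tY)\,g\,\exp(tY)\bigr).
\]
The first term does converge to $d\pi(Y)w$, but $h_\epsilon$ is a derivative of a $\delta$-sequence, with $L^1$-norm of order $\epsilon^{-1}$, while $\|\pi(g)w-w\|$ is only $o(1)$ on the support of $f_\epsilon$: in the $G_n$-directions transverse to $G_{n-1}$ the vector $w$ is merely continuous, not $C^1$. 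So $\pi(h_\epsilon)w\to0$ is not justified, and convergence of $\pi(f_\epsilon)w$ to $w$ in the $\cV_{n-1}$-topology does not follow. The repair is to smooth along $G_{n-1}$ instead: by Dixmier--Malliavin write $w=\sum_i\pi(\phi_i)w_i$ with $\phi_i\in C_c^\infty(G_{n-1})$ and $w_i\in\cH$, approximate each $w_i$ in $\cH$ by $v_i^{(k)}\in\cV$, and observe that $\pi(\phi_i)v_i^{(k)}\in\cV$ while $\|d\pi(Y)\pi(\phi_i)u\|\le C_{Y,\phi_i}\|u\|$ for every $u\in\cH$, so the approximation converges in every $\cV_{n-1}$-seminorm. (This density is really what the previous lemma's inclusion $\cV'_{n-1}\subset\cV'$ already presupposes; the genuinely new content of the present lemma is your seminorm bound, which is correct.)
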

Given $\phi\in C_{c}^{\infty}(G_{n-1})$, we set
\[
u_{\phi}:=\int_{g\in G_{n-1}}\phi(g)\pi\left(\begin{array}{cc}
g & 0\\
0 & 1
\end{array}\right)\lambda\, dg.
\]

\begin{lem}
The vector $u_{\phi}$ belongs to $\cV_{n-1}$, in particular to $\cH$.
The map
\[
\phi\to u_{\phi},\, C_{c}^{\infty}(G_{n-1})\to\cH
\]
is continuous.
\end{lem}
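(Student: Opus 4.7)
My plan is to realize $u_{\phi}$ as a vector in $\cH$ via a Riesz representation argument that leverages the preceding lemma ($\lambda\in\cV'_{n-1}$), and then to check $G_{n-1}$-smoothness by the standard translation trick. Throughout I write $\iota:G_{n-1}\hookrightarrow G_{n}$ for the block embedding $\iota(g)=\left(\begin{array}{cc}g & 0\\ 0 & 1\end{array}\right)$.

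The first step is the smoothing property of group convolution. For $\phi\in C_{c}^{\infty}(G_{n-1})$ set
\[
T_{\phi}:=\int_{G_{n-1}}\phi(g)\,\pi(\iota(g))\,dg,
\]
an integral of unitary operators, bounded on $\cH$ with norm $\le\|\phi\|_{L^{1}}$. Differentiating under the integral and using left invariance of the Haar measure on $G_{n-1}$ yields $d\pi(X)T_{\phi}w=T_{X\phi}w$ for all $X\in\cU(G_{n-1})$ and $w\in\cH$, where $X\phi\in C_{c}^{\infty}(G_{n-1})$ is a suitable derivative of $\phi$. Hence $\|d\pi(X)T_{\phi}w\|\le\|X\phi\|_{L^{1}}\|w\|$, so $T_{\phi}$, and likewise its adjoint $T_{\phi}^{*}=T_{\check\phi}$ for some $\check\phi\in C_{c}^{\infty}(G_{n-1})$, carries $\cH$ continuously into $\cV_{n-1}$.

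By the preceding lemma, $\lambda\in\cV'_{n-1}$: there exist $X_{1},\dots,X_{N}\in\cU(G_{n-1})$ and a constant $C_{\lambda}$ with $|(v,\lambda)|\le C_{\lambda}\max_{j}\|d\pi(X_{j})v\|$ for $v\in\cV_{n-1}$. Combining with the previous paragraph,
\[
|(T_{\phi}^{*}w,\lambda)|\le C_{\lambda}\max_{j}\|d\pi(X_{j})T_{\phi}^{*}w\|\le C_{\lambda}\max_{j}\|X_{j}\check\phi\|_{L^{1}}\|w\|,
\]
so $w\mapsto(T_{\phi}^{*}w,\lambda)$ is a continuous form on $\cH$. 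Riesz representation yields a unique $u_{\phi}\in\cH$ representing it; unwinding the integral shows that $u_{\phi}$ coincides with the formula in the statement. The estimate $\|u_{\phi}\|\le C_{\lambda}\max_{j}\|X_{j}\check\phi\|_{L^{1}}$, dominated on each $C_{K}^{\infty}(G_{n-1})$ by a finite-order $C^{m}$-seminorm of $\phi$, is the asserted continuity.

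Finally, the $G_{n-1}$-smoothness of $u_{\phi}$ follows from the translation identity $\pi(\iota(g_{0}))u_{\phi}=u_{L_{g_{0}}\phi}$ with $L_{g_{0}}\phi(g)=\phi(g_{0}^{-1}g)$: smoothness of the orbit map $g_{0}\mapsto\pi(\iota(g_{0}))u_{\phi}$ into $\cH$ reduces via the continuity just established to smoothness of $g_{0}\mapsto L_{g_{0}}\phi$ into $C_{c}^{\infty}(G_{n-1})$, which is standard. This places $u_{\phi}$ in $\cV_{n-1}$. The main obstacle is the Hilbert-space bound in the second step: without the preceding lemma's upgrade from $\lambda\in\cV'$ to $\lambda\in\cV'_{n-1}$, one would only control $|(T_{\phi}^{*}w,\lambda)|$ by $G_{n}$-derivatives of $T_{\phi}^{*}w$, which $T_{\phi}^{*}$ does not supply, and one would obtain merely $u_{\phi}\in\cV'_{n-1}$ rather than $u_{\phi}\in\cH$.
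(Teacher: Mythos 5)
Your argument is correct and is essentially the proof the paper intends: the paper omits it, deferring to Jacquet, and the argument there is exactly this smoothing-by-convolution/duality scheme --- $T_{\phi}^{*}$ maps $\cH$ continuously into $\cV_{n-1}$, the preceding lemma's upgrade $\lambda\in\cV'_{n-1}$ then bounds $w\mapsto(T_{\phi}^{*}w,\lambda)$, Riesz representation gives $u_{\phi}\in\cH$ with the stated norm estimate, and the translation identity $\pi(\iota(g_{0}))u_{\phi}=u_{L_{g_{0}}\phi}$ gives $G_{n-1}$-smoothness. You also correctly pinpoint why $\lambda\in\cV'_{n-1}$ (rather than merely $\lambda\in\cV'$) is the indispensable input.
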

~
\begin{lem}
\label{lem: smooth vecs}For every $X\in\cU(G_{n})$ and every $\phi\in C_{c}^{\infty}(G_{n-1})$
the vector $u_{\phi}$, which, a priori, is in $\cV'$, is in fact
in $\cH.$ Moreover, there is a continuous semi-norm $\mu$ on $C_{c}^{\infty}(G_{n-1})$
such that, for every $\phi\in C_{c}^{\infty}(G_{n-1}),$
\[
||d\pi(X)u_{\phi}||\le\mu(\phi).
\]

\end{lem}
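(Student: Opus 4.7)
The plan is to reduce the general case $X \in \cU(G_n)$ to the subcase $X \in \cU(G_{n-1})$ by exploiting the key Lemma above together with the fact that, since $\pi$ is irreducible and unitary, $Z(\cU(G_n))$ acts on $\lambda$ by the infinitesimal character of $\pi$.

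First I would treat the case $X \in \cU(G_{n-1})$. For $Y \in \Lie(G_{n-1})$, a change of variable in the integral defining $u_\phi$ yields
\[
\pi(\exp tY) u_\phi = \int_{G_{n-1}} \phi(g) \, \pi(\iota(\exp tY_{n-1}) g) \lambda \, dg = u_{L_{\exp tY_{n-1}} \phi},
\]
where $\iota(g) = \mathrm{diag}(g,1)$ and $L_h \phi(g) := \phi(h^{-1} g)$. Differentiating in $t$ at $0$, one obtains $d\pi(Y) u_\phi = u_{\partial_Y \phi}$ for a left-invariant derivation $\partial_Y$ on $G_{n-1}$; iterating, $d\pi(Y) u_\phi = u_{\tilde Y \phi}$ for every $Y \in \cU(G_{n-1})$ with $\tilde Y$ a left-invariant differential operator. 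Since $\phi \mapsto \tilde Y \phi$ is a continuous endomorphism of $C_c^\infty(G_{n-1})$, the preceding lemma on continuity of $\phi \mapsto u_\phi$ into $\cH$ supplies a continuous seminorm $\mu_Y$ with $\|d\pi(Y) u_\phi\| \le \mu_Y(\phi)$.

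For a general $X \in \cU(G_n)$ I would write
\[
d\pi(X) u_\phi = \int_{G_{n-1}} \phi(g) \, \pi(\iota(g)) \, d\pi\bigl(\mathrm{Ad}(\iota(g)^{-1}) X\bigr) \lambda \, dg.
\]
The key Lemma provides $d\pi(X') \lambda \in \cU(G_{n-1}) \, Z(\cU(G_n)) \, \lambda$ for every $X' \in \cU(G_n)$, and irreducibility of $\pi$ forces $Z(\cU(G_n))$ to act on $\lambda$ through the infinitesimal character $\chi$ of $\pi$. Applied to $X' := \mathrm{Ad}(\iota(g)^{-1}) X$, this gives
\[
d\pi\bigl(\mathrm{Ad}(\iota(g)^{-1}) X\bigr) \lambda = Y(g) \lambda
\]
for some $Y(g) \in \cU(G_{n-1})$ depending polynomially on the matrix entries of $g$ and $g^{-1}$ (since $\mathrm{Ad}$ is polynomial and the reduction through the Lemma preserves PBW degree). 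Expanding $Y(g)$ in a fixed finite basis $\{M_i\}$ of a sufficiently large finite-dimensional subspace of $\cU(G_{n-1})$, and then using $\pi(\iota(g)) M_i = \mathrm{Ad}(\iota(g))(M_i) \, \pi(\iota(g))$ to pull the $M_i$ back across $\pi(\iota(g))$, I would obtain
\[
d\pi(X) u_\phi = \sum_j d\pi(M_j) \, u_{q_j \phi},
\]
where each $q_j$ is a polynomial in the entries of $g$ and $g^{-1}$. Since $q_j \phi \in C_c^\infty(G_{n-1})$ depends continuously on $\phi$, the first step applied to $M_j$ delivers the required bound $\|d\pi(X) u_\phi\| \le \mu(\phi)$, and in particular places $d\pi(X) u_\phi$ in $\cH$.

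The main obstacle is the polynomial-dependence claim: one must verify that for $X$ of bounded PBW degree, the reduction furnished by the key Lemma produces $Y(g)$ taking values in a \emph{fixed} finite-dimensional subspace of $\cU(G_{n-1})$, with polynomial coefficients in the entries of $g$ and $g^{-1}$. This is a straightforward but tedious bookkeeping argument filtration by filtration in the PBW ordering of $\cU(G_n)$; it is carried out in \cite{jacquet2010distinction} for $F=\RR$ and transfers verbatim to $F=\CC$ now that the key Lemma is available in the complex case.
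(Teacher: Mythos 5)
Your argument is correct and is essentially the proof the paper intends: the paper defers to Jacquet's treatment of the real case, whose proof is exactly your reduction --- use $\pi(\iota(g))^{-1}d\pi(X)\pi(\iota(g)) = d\pi(\mathrm{Ad}(\iota(g)^{-1})X)$, apply the key Lemma together with the infinitesimal character to replace $d\pi(\mathrm{Ad}(\iota(g)^{-1})X)\lambda$ by $\cU(G_{n-1})\lambda$ with coefficients polynomial in the entries of $g$ and $g^{-1}$, conjugate back, and absorb the $\cU(G_{n-1})$-part into left derivatives of $\phi$ handled by the continuity of $\phi\mapsto u_\phi$. The polynomial-dependence point you flag as the main obstacle is in fact immediate: fix the reduction on a basis of the finite-dimensional space $\cU(G_n)_{\le \deg X}$ and extend by linearity, so $Y(g)$ automatically lies in a fixed finite-dimensional subspace with coefficients the (polynomial) matrix entries of $\mathrm{Ad}(\iota(g)^{-1})$ in that basis.
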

We note that Lemma 1 is used to prove the last Lemma.
\begin{lem}
Suppose $v_{0}\in\cH$ is a vector such that for every $X\in\cU(G_{n})$
the vector $d\pi(X)v_{0}$, which a priori is in $\cV'$, is in fact
in $\cH$. Then $v_{0}$ is in $\cV$.\end{lem}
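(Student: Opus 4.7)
The plan is a G\aa{}rding-approximation argument combined with a Sobolev-type estimate using unitarity. Pick an approximate identity $(\phi_k)$ in $C_c^{\infty}(G_n)$ concentrated at the identity and set $v_k := \pi(\phi_k) v_0$. By the classical G\aa{}rding theorem each $v_k$ lies in $\cV$ and $v_k \to v_0$ in $\cH$. The goal is to upgrade this to convergence $v_k \to v_0$ in the Fr\'echet topology of $\cV$, i.e.\ in every seminorm $v \mapsto \|d\pi(X) v\|$ for $X \in \cU(G_n)$; completeness of $\cV$ as the space of smooth vectors of the continuous representation $\pi$ then forces $v_0 \in \cV$.

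Weak convergence of derivatives in $\cH$ is immediate from the hypothesis: for any $X \in \cU(G_n)$ and any $w \in \cV$,
\[
(d\pi(X) v_k, w) = (v_k, d\pi(X^{*}) w) \longrightarrow (v_0, d\pi(X^{*}) w) = (d\pi(X) v_0, w),
\]
where the last equality uses $d\pi(X) v_0 \in \cH$, and density of $\cV$ in $\cH$ yields $d\pi(X) v_k \rightharpoonup d\pi(X) v_0$ weakly. The main obstacle is the upgrade to strong convergence, and here I exploit unitarity of $\pi$. For $X$ in a real basis of the Lie algebra, $d\pi(X)$ is formally skew-symmetric, so for smooth $v$ one has $\|d\pi(X) v\|^2 = -(v, d\pi(X^2) v)$. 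Applied to $v = v_k$ and passed to the limit using $v_k \to v_0$ in $\cH$ together with the weak convergence $d\pi(X^2) v_k \rightharpoonup d\pi(X^2) v_0$ (again supplied by the hypothesis), this yields $\|d\pi(X) v_k\|^2 \to -(v_0, d\pi(X^2) v_0)$. A parallel identity, obtained by approximating $d\pi(X) v_0 \in \cH$ itself by G\aa{}rding vectors, identifies this limit with $\|d\pi(X) v_0\|^2$. Convergence of norms combined with weak convergence in the Hilbert space $\cH$ then gives the desired strong convergence $d\pi(X) v_k \to d\pi(X) v_0$. For higher-order $X \in \cU(G_n)$ I iterate, using a Poincar\'e-Birkhoff-Witt basis to reduce to first-order factors and invoking the hypothesis on every element of the form $X_{1}^{*}\cdots X_{r}^{*} X_{r} \cdots X_{1}$ that arises.

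I expect the technical heart to be the identity $\|d\pi(X) v_0\|^2 = -(v_0, d\pi(X^2) v_0)$ in the absence of a priori smoothness of $v_0$. This is the one point where the distributional definition of $d\pi(X) v_0$ as an element of $\cV'$ must be reconciled with the Hilbert-space inner product, and it is handled by a density argument combining unitarity with the hypothesis that $d\pi(X^2) v_0 \in \cH$. This step runs identically over $F = \RR$ and $F = \CC$ since it depends only on $G_n$ as a real Lie group, so the proof of \cite{jacquet2010distinction} applies verbatim and the author's remark that no reproof is needed is justified.
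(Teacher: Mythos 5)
The paper gives no proof of this lemma (it defers to Jacquet), so I judge your argument on its own terms. Your overall strategy --- mollify $v_0$ by an approximate identity, show the G\aa{}rding vectors $v_k=\pi(\phi_k)v_0$ converge in every seminorm $\|d\pi(X)\cdot\|$, and conclude by completeness of $\cV$ --- is the right one and is essentially the standard argument. But there is a genuine gap at the step where you claim $d\pi(X)v_k\rightharpoonup d\pi(X)v_0$ weakly in $\cH$. Convergence of the pairings $(d\pi(X)v_k,w)$ for $w$ in the dense subspace $\cV$ does \emph{not} give weak convergence in $\cH$: you also need $\sup_k\|d\pi(X)v_k\|<\infty$. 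Your proposed source of that bound, the identity $\|d\pi(X)v_k\|^2=-(v_k,d\pi(X^2)v_k)$, only trades the problem for a bound on $\|d\pi(X^2)v_k\|$ (needed to pair the weakly convergent factor against the strongly convergent one), which by the same identity requires a bound on $\|d\pi(X^4)v_k\|$, and so on; the regress never closes. The naive estimate $\|d\pi(X)\pi(\phi_k)v_0\|\le\|X\phi_k\|_{L^1}\|v_0\|$ is useless because $\|X\phi_k\|_{L^1}$ blows up as $\phi_k$ concentrates, so the bound is not free.

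The standard way to close the gap is to move the derivative through the convolution onto $v_0$ via the adjoint action. Writing $\mathrm{Ad}(g^{-1})X=\sum_i c_i(g)X_i$ in a real basis $(X_i)$ of the Lie algebra, one computes for every $w\in\cV$
\[
\left(d\pi(X)\pi(\phi_k)v_0,\,w\right)=\sum_i\int\phi_k(g)\,c_i(g)\,\left(\pi(g)\,d\pi(X_i)v_0,\,w\right)dg,
\]
where the hypothesis $d\pi(X_i)v_0\in\cH$ is exactly what lets you convert the distributional pairing $(v_0,d\pi(X_i)u)$ into an honest inner product $-(d\pi(X_i)v_0,u)$. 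Hence $d\pi(X)v_k=\sum_i\pi(c_i\phi_k)\,d\pi(X_i)v_0$, which yields at once the uniform bound $\|d\pi(X)v_k\|\le C\sum_i\|d\pi(X_i)v_0\|$ (the $c_i$ are bounded on the shrinking supports of the $\phi_k$) and, better, the strong convergence $d\pi(X)v_k\to d\pi(X)v_0$, since $\pi(c_i\phi_k)u\to c_i(e)u$ for every $u\in\cH$. Iterating over a Poincar\'e--Birkhoff--Witt basis handles all of $\cU(G_n)$, after which your completeness argument finishes the proof and your Radon--Riesz step becomes unnecessary. You are right on the last point: the argument sees $G_n$ only as a real Lie group, so nothing changes between $\RR$ and $\CC$.
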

\begin{prop}
For every $\phi\in C_{c}^{\infty}(G_{n-1})$ the vector
\[
u_{\phi}:=\int_{g\in G_{n-1}}\phi(g)\pi\left(\begin{array}{cc}
g & 0\\
0 & 1
\end{array}\right)\lambda\, dg
\]
is in $\cV$. Furthermore, the map
\[
\phi\mapsto u_{\phi},\, C_{c}^{\infty}(G_{n-1})\to\cV
\]
is continuous.
\end{prop}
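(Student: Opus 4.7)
The plan is to deduce the proposition directly from the two lemmas immediately preceding it, with no additional analysis. The vector $u_\phi$ already lies in $\cH$ by the earlier lemma which gave continuity of $\phi \mapsto u_\phi$ into $\cH$. By Lemma \ref{lem: smooth vecs}, for every $X \in \cU(G_n)$ the vector $d\pi(X) u_\phi$, a priori only in $\cV'$, actually lies in $\cH$. The smoothness criterion stated just before the proposition asserts exactly that a vector $v_0 \in \cH$ with $d\pi(X) v_0 \in \cH$ for all $X \in \cU(G_n)$ must belong to $\cV$. Applying this criterion with $v_0 = u_\phi$ yields $u_\phi \in \cV$ at once.

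For the continuity statement, recall that $\cV$ carries the locally convex topology defined by the seminorms $v \mapsto \|d\pi(X) v\|$, as $X$ ranges over $\cU(G_n)$. Lemma \ref{lem: smooth vecs} provides, for each such $X$, a continuous seminorm $\mu_X$ on $C_c^\infty(G_{n-1})$ with
\[
\|d\pi(X) u_\phi\| \le \mu_X(\phi)
\]
for every $\phi \in C_c^\infty(G_{n-1})$. Since a linear map into $\cV$ is continuous precisely when each of the defining seminorms pulls back to a continuous seminorm on the source, this is exactly the assertion that $\phi \mapsto u_\phi$ is continuous from $C_c^\infty(G_{n-1})$ into $\cV$.

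Thus the proposition is a formal consequence of the two preceding lemmas. The main obstacle, already overcome in the earlier steps, is Lemma \ref{lem: smooth vecs}, whose proof relies on the algebraic Lemma~1 for $\Lie(G_n(\CC))$; that lemma is the only place where the passage from the real case to the complex case genuinely requires work, and it is handled through the decomposition $\Lie(G_n(\CC)) = i_1(\Lie(G_n(\RR))) \oplus i_2(\Lie(G_n(\RR)))$. Once Lemma \ref{lem: smooth vecs} is in hand, the present proposition follows in a few lines exactly as sketched above, and the proof given in \cite{jacquet2010distinction} for the real case transfers verbatim.
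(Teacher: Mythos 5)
Your argument is correct and is exactly the intended one: the paper omits the proof, deferring to \cite{jacquet2010distinction}, where the proposition is obtained precisely by combining the membership $u_{\phi}\in\cH$, Lemma \ref{lem: smooth vecs}, and the smoothness criterion, with continuity read off from the seminorm bounds $||d\pi(X)u_{\phi}||\le\mu(\phi)$. No discrepancy with the paper's approach.
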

~
\begin{lem}
If $v=u_{\phi}$ then
\[
W_{v}\left(\begin{array}{cc}
g & 0\\
0 & 1
\end{array}\right)=\phi_{0}(g),
\]
where
\[
\phi_{0}(g)=\int_{U_{n-1}}\phi((gu)^{-1})\overline{\theta_{n-1}}(u)du.
\]

\end{lem}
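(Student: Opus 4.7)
\emph{Proof proposal.} The plan is to compute the pairing $(w, u_\phi)$ for an arbitrary test vector $w \in \cV$ in two different ways and extract $W_{u_\phi}$ by matching the two expressions.

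Using the integral definition of $u_\phi$ together with the continuity of $\phi \mapsto u_\phi$ furnished by the preceding proposition, I would interchange the duality pairing with the integral to obtain
\[
(w, u_\phi) = \int_{G_{n-1}} \phi(h) \bigl(w, \pi(\tilde h)\lambda\bigr)\, dh,
\]
where $\tilde h := \begin{pmatrix} h & 0 \\ 0 & 1 \end{pmatrix}$. The adjoint property for a unitary action rewrites $(w, \pi(\tilde h)\lambda) = (\pi(\tilde h^{-1}) w, \lambda) = W_w(\tilde h^{-1})$. After the change of variable $h \mapsto h^{-1}$ (legitimate since $\GL_{n-1}(\CC)$ is unimodular) and the decomposition $h = um$ with $u \in U_{n-1}$ and $m$ in a section of $U_{n-1} \backslash G_{n-1}$, the left $\theta_{n-1}$-equivariance $W_w(\tilde u \tilde m) = \theta_{n-1}(u) W_w(\tilde m)$ allows the $U_{n-1}$-integral to be factored out, yielding
\[
(w, u_\phi) = \int_{U_{n-1} \backslash G_{n-1}} W_w(\tilde m) \left[\int_{U_{n-1}} \phi((um)^{-1})\, \theta_{n-1}(u)\, du \right] dm.
\]

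Next, I would compare this with the explicit scalar-product formula
\[
\overline{(u_\phi, w)} = \int_{U_{n-1} \backslash G_{n-1}} W_w(\tilde m)\, \overline{W_{u_\phi}(\tilde m)}\, dm,
\]
which must agree with the preceding identity for every $w \in \cV$. Since the Whittaker model realizes $\cV$ faithfully in the space of functions on $U_{n-1} \backslash G_{n-1}$, the two densities paired against $W_w$ must coincide. This identifies $\overline{W_{u_\phi}(\tilde m)}$ with the bracketed $U_{n-1}$-integral; a substitution $u \mapsto u^{-1}$ followed by conjugation then recovers the formula $\phi_0(g)$ of the lemma.

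The main obstacle is bookkeeping the conjugations arising from sesquilinearity of the scalar product and from the unitary character $\theta_{n-1}$, together with tracking the order of $u$ and $g$ inside $\phi$ through the variable changes; modulo these signs, the identity is a direct computation. All interchanges of integral and duality pairing are justified by the continuity statements in the preceding lemmas and proposition.
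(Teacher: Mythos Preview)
The paper gives no proof of this lemma, deferring entirely to Jacquet's argument in the real case; your two-step plan---unfold the defining integral of $u_\phi$ inside the pairing $(w,u_\phi)$ and then compare with the explicit Whittaker scalar-product formula for the same pairing---is exactly that argument, and the conjugation bookkeeping you flag does work out once carried through carefully.

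There is, however, one genuine gap in your justification of the matching step. Having arrived at
\[
\int_{U_{n-1}\backslash G_{n-1}} W_{w}\!\left(\begin{smallmatrix} m & 0 \\ 0 & 1\end{smallmatrix}\right)\overline{W_{u_\phi}\!\left(\begin{smallmatrix} m & 0 \\ 0 & 1\end{smallmatrix}\right)}\,dm
\;=\;
\int_{U_{n-1}\backslash G_{n-1}} W_{w}\!\left(\begin{smallmatrix} m & 0 \\ 0 & 1\end{smallmatrix}\right) B(m)\,dm
\qquad(\text{all }w\in\cV),
\]
you conclude that the two integrands coincide because the Kirillov restriction $v\mapsto W_v|_{G_{n-1}}$ is \emph{faithful}. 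Injectivity points in the wrong direction: it says that $W_w|_{G_{n-1}}=0$ forces $w=0$, not that the family $\{W_w|_{G_{n-1}}:w\in\cV\}$ is rich enough to separate functions in $L^{2}(U_{n-1}\backslash G_{n-1},\theta_{n-1})$. What you actually need is that the $L^{2}$-closure of this family is \emph{all} of $L^{2}(U_{n-1}\backslash G_{n-1},\theta_{n-1})$. This holds, but for a different reason: that closure is a nonzero closed $P$-invariant subspace of $L^{2}(U_{n}\backslash P,\theta_{n})\cong L^{2}(U_{n-1}\backslash G_{n-1},\theta_{n-1})$, and the right regular representation of $P$ on the latter space is irreducible (a standard fact, essentially an iterated Stone--von~Neumann theorem). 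Once this density is in place, your matching argument goes through and the lemma follows.
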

I would like to thank Omer Offen and Erez Lapid for posing me this
question, and Dmitry Gourevitch and Herv\'e Jacquet for their interest
in this project.\\
The research was supported by ISF grant No. 1394/12.
\small
\bibliographystyle{plain}
\bibliography{kirillov_refs}

\end{document}